\newcommand{\la}{\lambda}
\newcommand{\be}{\beta}
\newcommand{\de}{\delta}
\newcommand{\e}{\varepsilon}
\newcommand{\BQ}{\mathbb{Q}}
\newcommand{\BN}{\mathbb{N}}
\newcommand{\ov}{\overline}
\newcommand{\Erdos}{Erd{\H o}s}
\newcommand{\E}{\mathcal E}
\newtheorem{lemma}{Lemma}[section]
\newtheorem{prop}[lemma]{Proposition}
\newtheorem{thm}[lemma]{Theorem}
\newtheorem{cor}[lemma]{Corollary}
\theoremstyle{definition}
\theoremstyle{remark}
\newtheorem{rmk}[lemma]{Remark}
\numberwithin{equation}{section}
\numberwithin{table}{section}
\begin{document}
\title[A lower bound for the dimension of Bernoulli convolutions]
{A lower bound for the dimension of Bernoulli convolutions}
\author{Kevin G. Hare}
\email{kghare@uwaterloo.ca}
\address{Department of Pure Mathematics, University of Waterloo, Waterloo, Ontario, Canada N2L 3G1}
\thanks{Research of K.G. Hare was supported by NSERC Grant RGPIN-2014-03154}
\thanks{Computational support provided in part by the Canadian Foundation for Innovation,
    and the Ontario Research Fund.}
\author{Nikita Sidorov}
\email{sidorov@manchester.ac.uk}
\address{School of Mathematics, The University of Manchester, Oxford Road, Manchester M13 9PL, United Kingdom}
\date{\today}
\keywords{Bernoulli convolution, Garsia's entropy.}
\subjclass{26A30 (primary), 11R06 (secondary).}

\begin{abstract}
Let $\be\in(1,2)$ and let $H_\be$ denote Garsia's entropy for the Bernoulli convolution
$\mu_\be$ associated with $\be$.
In the present paper we show that $H_\be>0.82$ for all $\be \in (1, 2)$ and improve this bound
for certain ranges. Combined with recent
results by Hochman and Breuillard-Varj\'u, this yields $\dim (\mu_\be)\ge0.82$ for all $\be\in(1,2)$.

In addition, we show that if an algebraic $\be$ is such that
    $[\mathbb{Q}(\be): \mathbb{Q}(\be^k)] = k$ for some $k \geq 2$ then
    $\dim(\mu_\be)=1$. Such is, for instance, any root of a Pisot
    number which is not a Pisot number itself.
\end{abstract}

\maketitle

\section{Introduction and summary}
\label{sec:intro}

Bernoulli convolutions are a class of measures on the real line with a compact support
which are closely related to $\beta$-expansions introduced
by R\'enyi \cite{Re} and first studied by R\'enyi and by Parry \cite{Pa, Re}.
Let us recall the basic definitions.

Suppose $\beta$ is a real number greater than~1. A {\em $\beta$-expansion}
of the real number $x \in [0,1]$ is an infinite sequence of integers
$(a_1, a_2, a_3, \dots)$ such that $x = \sum_{n \ge 1}a_n \beta^{-n}$.
For the purposes of this paper, we assume that $1 < \beta < 2$ and $a_i \in \{0, 1\}$.

Let $\mu_\be$ denote the {\em Bernoulli convolution} parameterized by $\be$ on $I_\be:=[0,1/(\be-1)]$, i.e.,
\[
\mu_\be(E)=\mathbb P\left\{(a_1,a_2,\dots)\in\{0,1\}^\BN : \sum_{k=1}^\infty a_k\be^{-k}\in E\right\}
\]
for any Borel set $E\subseteq I_\be$, where $\mathbb P$ is the product measure on $\{0,1\}^\BN$
with $\mathbb P(a_1=0)=\mathbb P(a_1=1)=1/2$. Since $\be<2$, it is obvious that $\text{supp}\,(\mu_\be)=I_\be$.

Bernoulli convolutions have been studied since the 1930s (see, e.g., \cite{Varju-survey} and references therein).
An important property of $\mu_\be$ is the fact that it is either absolutely continuous (in which case
it is actually equivalent to the Lebesgue measure -- see \cite{MS}) or purely singular.

Recall that a number $\be>1$ is called a {\em Pisot number} if it is an algebraic integer whose other Galois conjugates
are less than~1 in modulus. \Erdos\ \cite{Erdos39} showed that if $\be$ is a Pisot number,
then $\mu_\be$ is singular. Garsia \cite{Garsia62} introduced a new class of algebraic integers
-- now referred to as {\em Garsia numbers} -- and proved that $\mu_\be$ is absolutely continuous if $\be$ is
a Garsia number. More recently, Varj\'u \cite{Varju-Bernoulli} found a new class of $\be$ with this
property, though no explicit examples are given. Solomyak \cite{Sol} proved that for Lebesgue-a.e.
$\be\in(1,2)$ the Bernoulli convolution is absolutely continuous.

It is also known that $\mu_\be$ is exact-dimensional (which is a special case of a general result in \cite{Hochman}).
Namely, there is a number $\alpha$ such that
\[
\lim_{r\to 0}\frac{\log \mu_\be(x-r,x+r)}{\log r}=\alpha
\]
for $\mu_\be$-almost every $x$. We will call this number $\alpha$ {\em the dimension of $\mu_\be$} and
denote by $\dim(\mu_\be)$. In particular, the exact-dimensionality implies $\dim_H(\mu_\be)=\dim(\mu_\be)$.
Clearly, if $\mu_\be$ is absolutely continuous, then $\dim(\mu_\be)=1$. Whether the converse is true for
this family, remains unknown.

Garsia \cite{Garsia63} introduced the following useful quantity associated with a Bernoulli convolution. Namely, put
\[
D_n(\be)=\left\{x\in I_\be: x=\sum_{k=1}^n a_k\be^{-k}\,\, \text{with}\,\, a_k\in\{0,1\}\right\}
\]
and for $x\in D_n(\be)$,
\begin{equation}\label{eq:pnx}
p_n(x)=\#\left\{(a_1,\dots,a_n)\in\{0,1\}^n : x=\sum_{k=1}^n a_k \be^{-k}\right\}.
\end{equation}
Let $\mu_\be^{(n)}$ denote the measure corresponding to the distribution $\bigl\{\frac{p_n(x)}{2^n} : x\in D_n(\be)\bigr\}$
and let $H_n(\be)=H(\mu_\be^{(n)})$, the entropy of the corresponding distribution, i.e.,
\[
H_n(\be)=-\sum_{x\in D_n(\be)} \frac{p_n(x)}{2^n}\log \frac{p_n(x)}{2^n}.
\]
Finally, put
\[
H_\be=\lim_{n\to\infty}\frac{H_n(\be)}{n\log\be}
\]
(it was shown in \cite{Garsia63} that the limit always exists, in view of $\{H_n(\be)\}_{n\ge1}$ being
subadditive). The value $H_\be$ is called {\em Garsia's entropy}.

Obviously, if $\be$ is transcendental or algebraic but not satisfying an algebraic equation with coefficients $\{-1,0,1\}$ (i.e.,
it is not of {\em height one}), then all the sums $\sum_{k=1}^n a_k\be^{-k}$ are distinct, whence $p_n(x)=1$
for any $x\in D_n(\be)$, and $H_\be=\log2/\log\be>1$.

However, if $\be$ is Pisot, then it was shown in \cite{Garsia63, PU} that $H_\be=\dim(\mu_\be)<1$
-- which means in particular that $\be$ is of height one. Furthermore, Garsia also proved that if
$H_\be<1$, then $\mu_\be$ is singular.

Alexander and Zagier in \cite{AlexanderZagier91} managed to evaluate $H_\be$ for the golden ratio
$\be=\tau$ with an astonishing accuracy. It turned out that $H_\tau$ is close to 1 -- in fact $H_\tau\approx0.9957$.
Grabner, Kirschenhofer and Tichy \cite{GrabnerKirschenhoferTichy02} extended this method to the multinacci numbers,
which are the positive real roots of $x^m=x^{m-1}+x^{m-2}+\dots+x+1$. No other values of $H_\be$ correct up to
at least two decimal places are known to date.

In our previous paper on this subject \cite{HS2010} we showed that if $\be$ is a Pisot number, then $H_\be>0.81$ and
improved this bound for various ranges. The main purpose of the present paper is to extend this result to
all $\be$ with an improved lower bound:
\begin{thm}\label{thm:082}
We have
\[
H_\be>0.82
\]
for all $\be\in(1,2)$.
\end{thm}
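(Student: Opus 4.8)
The plan is to split according to whether $\be$ is of \emph{height one}, i.e.\ a root of a polynomial with coefficients in $\{-1,0,1\}$. As noted just before the statement, if $\be$ is not of height one then every sum $\sum_{k=1}^{n}a_{k}\be^{-k}$ is attained by a single digit string, so $p_{n}\equiv 1$, $H_{n}(\be)=n\log 2$, and $H_{\be}=\log 2/\log\be>1>0.82$. Hence the whole content of the theorem is the case in which $\be$ is an algebraic integer that is a root of some $\{-1,0,1\}$-polynomial; these form a countable but dense subset of $(1,2)$ containing every Pisot and every Salem number in the interval.

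For such $\be$ the engine is a \emph{computable lower bound} for $H_{\be}$, obtained by reversing the usual subadditivity argument. Writing the value of a length-$(m+n)$ digit string as $Y+\be^{-m}Z$ with $Y\sim\mu_{\be}^{(m)}$ and $Z\sim\mu_{\be}^{(n)}$ independent, one has the identity $H_{m+n}(\be)=H_{m}(\be)+H_{n}(\be)-H(Z\mid Y+\be^{-m}Z)$, and the overlap term is at most $\log\kappa_{m}(\be)$, where $\kappa_{m}(\be)$ is the largest number of points of $D_{m}(\be)$ contained in an interval of length $\be^{-m}/(\be-1)$. When $\be$ is a Pisot number, Garsia's separation estimate (any nonzero $\xi\in\BZ[\be]$ of the form $\sum_{k=1}^{m}c_{k}\be^{m-k}$ with $c_{k}\in\{-1,0,1\}$ satisfies $|\xi|\ge c(\be)$, with $c(\be)>0$ independent of $m$ because the conjugates of $\xi$ at the places inside the unit disc are bounded) gives $\kappa_{m}(\be)\le 1+1/(c(\be)(\be-1))=:\kappa(\be)$, a constant independent of $m$. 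Then $\{H_{n}(\be)-\log\kappa(\be)\}$ is superadditive and Fekete's lemma yields
\[
H_{\be}\ \ge\ \frac{H_{m}(\be)-\log\kappa(\be)}{m\log\be}\qquad\text{for every } m\ge 1 .
\]
Since $H_{m}(\be)/(m\log\be)$ converges to $H_{\be}$ from above and both $H_{m}(\be)$ and $\kappa(\be)$ are explicitly computable, taking $m$ large makes this bound sharp. In practice $H_{m}(\be)$ is obtained by enumerating the collisions in $D_{m}(\be)$, which correspond to $\{-1,0,1\}$-polynomials of degree $<m$ divisible by the minimal polynomial of $\be$; for Pisot $\be$ these are generated by a finite ``carry'' automaton, so the whole thing reduces to a finite computation. (Equivalently one may run the argument through the collision count $\sum_{x}p_{m}(x)^{2}$ and the R\'enyi inequality $H_{m}(\be)\ge 2m\log 2-\log\sum_{x}p_{m}(x)^{2}$, bounding its growth rate by the weighted spectral radius of the same automaton.)

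To cover all of $(1,2)$ I would partition it into finitely many subintervals $[\be_{1},\be_{2}]$. On subintervals where $\log\be_{2}$ is small a crude estimate on $\kappa_{m}$ (equivalently on the collision growth) already forces $H_{\be}>0.82$ with room to spare. On the remaining subintervals one must treat the height-one parameters uniformly: the displayed inequality with $m$ taken as large as the computation permits, together with upper bounds on $\kappa(\be)$ and on the weighted spectral radius of the carry automaton that hold for all $\be\in[\be_{1},\be_{2}]$ and do \emph{not} depend on the degree of $\be$ (for Pisot $\be$ this is possible precisely because all conjugates lie in the open unit disc). Because $\be\mapsto H_{\be}$ is discontinuous --- it drops exactly at Pisot numbers --- these bounds are established with interval arithmetic on the automaton, which is where the computational support is needed; one also carries out the sharpest available computation for the small Pisot numbers individually, which is what pushes the $0.81$ of \cite{HS2010} up to $0.82$ (and higher on some subranges). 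Finally one checks that the weakest of the finitely many subinterval bounds still exceeds $0.82$.

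The part I expect to be the main obstacle is the \emph{non-Pisot} height-one $\be$: for these $\kappa_{m}(\be)$ need not be bounded in $m$, the carry automaton is infinite, and discarding states gives bounds in the wrong direction, so neither the superadditivity trick nor a naive truncation applies as stated. The resolution is that the genuine difficulty is confined to a thin set. If $\be$ has a Galois conjugate of modulus greater than $1$, then divisibility by its minimal polynomial forces the admissible $\{-1,0,1\}$-quotients to be so sparse that $\sum_{x}p_{n}(x)^{2}$ grows slowly and $H_{\be}$ sits comfortably above $0.82$; the only borderline case is that of Salem numbers, whose conjugates lie on the unit circle, for which $\sum_{x}p_{n}(x)^{2}$ must be bounded directly, exploiting that a carry which has drifted away from $0$ is forced to take a long time to return, so that such relations are too rare to affect the bound. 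Dovetailing these non-Pisot estimates with the finite Pisot computations, uniformly across every subinterval, is the most technical piece of the argument.
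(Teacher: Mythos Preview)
Your proposal has a genuine gap at the uniformity step. The quantity you call $\kappa_m(\beta)$ coincides with what the paper calls $m_m(\beta)$, and the paper uses it too---but differently. Your plan is to bound $\kappa_m(\beta)\le\kappa(\beta)$ independently of $m$ via Garsia separation and then invoke superadditivity to obtain $H_\beta\ge (H_m(\beta)-\log\kappa(\beta))/(m\log\beta)$. This is fine for a \emph{single} Pisot $\beta$, but it cannot be made uniform over an interval: the separation constant depends on how close the conjugates of $\beta$ are to the unit circle, and both $H_m(\beta)$ and the carry automaton depend on the minimal polynomial of $\beta$; there is no single automaton and no single $\kappa$ valid for all height-one $\beta$ in $[\beta_1,\beta_2]$ independently of degree. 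For non-Pisot height-one $\beta$ you concede the difficulty outright, and the sketched fixes (conjugates outside the disc, Salem drift arguments) are not made precise. Since height-one parameters are dense and of unbounded degree in every subinterval, this is a real obstruction, not a technicality.

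The paper bypasses all of this with an observation you are missing: for each fixed $n$, the map $\beta\mapsto m_n(\beta)$ is \emph{piecewise constant} on $(1,2)$, with finitely many transition points at roots of $\{-1,0,1\}$-polynomials of degree $\le n$. One therefore computes $m_n(\beta)$ exactly on each of finitely many subintervals by evaluating at a single sample point. Combined with the bound $H_\beta\ge\log_\beta\bigl(2/m_n(\beta)^{1/n}\bigr)$ (which comes from Lalley's lemma and needs no separation, no algebraicity, no automata), this gives a rigorous lower bound for $H_\beta$ valid for every $\beta$ in the subinterval---transcendental, Salem, or otherwise. The range $(1,1.526]$ is handled first by the monotonicity $H_\beta\ge H_{\beta^k}$ (a reduction your proposal does not mention), and the piecewise-constant computation on $(1.526,2)$, pushed to $n=18$, completes the proof.
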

We also show that $H_\be>1$ for certain classes
of algebraic non-Pisot $\be$ of height~one (Theorem~\ref{thm:pisot-entropy}).

The main reason we have decided to return to this topic is a recent breakthrough made by Hochman \cite{Hochman} (see
also \cite[Theorem~19]{BV} for a detailed explanation).

\begin{thm} [Hochman, 2014]\label{thm:hochman}
If $\be\in(1,2)$ is algebraic, then
\[
\dim(\mu_\be)=\min\{H_\be,1\}.
\]
\end{thm}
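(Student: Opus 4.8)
The plan is to prove the equality in both directions, the upper bound being the classical one. That $\dim(\mu_\be)\le 1$ is clear since $\mathrm{supp}(\mu_\be)=I_\be$ is a bounded interval. For $\dim(\mu_\be)\le H_\be$ I would run Garsia's original argument: for $x'\in D_n(\be)$, every infinite continuation of a digit string realizing $x'$ produces a point within $\be^{-n}/(\be-1)$ of $x'$, so $\mu_\be\bigl(B(x',\be^{-n}/(\be-1))\bigr)\ge p_n(x')/2^n$; since a $\mu_\be$-typical point lies within $\be^{-n}/(\be-1)$ of its $n$-th truncation, integrating $-\log\mu_\be\bigl(B(x,O(\be^{-n}))\bigr)$ against $\mu_\be$ yields at most $H_n(\be)$. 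Dividing by $\log(1/\be^{-n})=n\log\be$, applying Fatou's lemma, and invoking exact-dimensionality (so that the local dimension of $\mu_\be$ equals the constant $\dim(\mu_\be)$ a.e.) would give $\dim(\mu_\be)\le\lim_n H_n(\be)/(n\log\be)=H_\be$.

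The lower bound $\dim(\mu_\be)\ge\min\{H_\be,1\}$ is where all the difficulty lies. The first ingredient is an exponential separation estimate, available because $\be$ is algebraic: two distinct atoms $\sum_{k=1}^n a_k\be^{-k}$ and $\sum_{k=1}^n a'_k\be^{-k}$ of $\mu_\be^{(n)}$ differ by $\be^{-n}$ times a nonzero value at $\be$ of a polynomial of degree $<n$ with coefficients in $\{-1,0,1\}$, and by Garsia's separation lemma (a Mahler-measure bound) this is at least $c\,M^{-n}$, with $c>0$ and $M\ge1$ depending only on the minimal polynomial of $\be$. Hence the atoms of $\mu_\be^{(n)}$ are $\rho^n$-separated with $\rho=(M\be)^{-1}\in(0,1)$ --- exactly Hochman's exponential separation hypothesis. (If $\be$ fails to be of height one there are no coincidences at all, $H_\be=\log 2/\log\be>1$, and the claim becomes $\dim(\mu_\be)=1$, which is subsumed in the same machinery.)

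With exponential separation in hand, the engine would be Hochman's inverse theorem for the entropy of convolutions: if a convolution $\nu*\lambda$ has $\mathcal{D}_{\be^{-n}}$-entropy only marginally above that of $\nu$, then at all but a small proportion of the scales $\be^{-1},\dots,\be^{-n}$ the measure $\nu$ is locally nearly uniform or $\lambda$ is locally nearly atomic. One would iterate the self-similarity relation $\mu_\be=\mu_\be^{(\ell)}*T_{\be^{-\ell}}\mu_\be$ (with $T_t$ the dilation $x\mapsto tx$) to present $\mu_\be$ as a convolution of many rescaled copies of itself. Assume, for contradiction, that $\dim(\mu_\be)=s<\min\{H_\be,1\}$, so $H(\mu_\be,\mathcal{D}_{\be^{-n}})\approx sn\log\be$, a fixed proportion below both $n\log\be$ and $H_n(\be)\approx H_\be\,n\log\be$. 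This entropy deficit forces, through the inverse theorem, that $\mu_\be$ is close to atomic on a positive-density set of scales; self-similarity propagates this concentration to arbitrarily fine scales, contradicting exponential separation, which makes $\mu_\be$ genuinely spread over $\gtrsim 2^{ck}$ distinct atoms at scale $\be^{-k}$ that cannot be packed into fewer than $\gtrsim\rho^{-k}$ cells. The contradiction would deliver $\dim(\mu_\be)\ge\min\{H_\be,1\}$.

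The main obstacle is squarely the inverse theorem for the entropy of convolutions together with the delicate multiscale bookkeeping needed to deploy it along the self-similar structure: this is the technical heart of \cite{Hochman} (see also \cite[Theorem~19]{BV}), and I would not expect any shortcut through it. By contrast, the remaining ingredients --- the convolution identity, the Garsia separation estimate, and the passage among local dimension, entropy dimension, and Hausdorff dimension for exact-dimensional measures --- are standard.
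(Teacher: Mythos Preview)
The paper does not give a proof of this statement. Theorem~\ref{thm:hochman} is stated as a result of Hochman \cite{Hochman} (with a pointer also to \cite[Theorem~19]{BV} for a detailed explanation) and is used as a black box in deriving the paper's main Theorem~\ref{thm:main}. Your sketch is in fact a faithful outline of the argument in \cite{Hochman}: the easy upper bound $\dim(\mu_\be)\le\min\{H_\be,1\}$ via Garsia's entropy estimate, exponential separation of the atoms of $\mu_\be^{(n)}$ for algebraic $\be$ via a Mahler-measure/height bound, and then Hochman's inverse theorem for the entropy of convolutions together with the multiscale self-similarity bookkeeping to rule out a dimension drop. You are also right that the inverse theorem is the irreducible technical core and admits no shortcut. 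But as far as comparison with ``the paper's own proof'' goes, there is nothing to compare against: the present paper simply cites the result and does not attempt to reprove it.
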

Essentially, this remarkable theorem says that if we have even a weak separation
of the elements of $D_n(\be)$, then the topological quantity, $\dim(\mu_\be)$,
coincides with the combinatorial one, $H_\be$. If $\be$ is Pisot, then this is
relatively straightforward (and known since \cite{PU}); for all other algebraic $\be$
this fact is highly non-trivial.

Another important result was proved recently in \cite[Corollary~6]{BV2}:

\begin{thm}[Breuillard and Varj\'u, 2016] 
We have
\[
\min_{\be\in(1,2)}\dim \mu_\be = \inf_{\be\in(1,2)\cap\overline{\BQ}}\dim \mu_\be,
\]
where $\ov\BQ$ stands for the set of algebraic numbers. 
\end{thm}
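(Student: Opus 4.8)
The inequality ``$\ge$'' is immediate, since the right-hand side is an infimum over a subset of $(1,2)$. The substance is ``$\le$'', together with the fact that the left-hand infimum is attained; the plan has three parts: (i) $\be\mapsto\dim(\mu_\be)$ is lower semicontinuous on $(1,2)$; (ii) it tends to $1$ as $\be\to1^+$ and as $\be\to2^-$, so this infimum is attained at some $\be_*\in(1,2)$; and (iii) $\dim(\mu_{\be_*})$ is approximated from above by $\dim(\mu_\be)$ with $\be$ algebraic.

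For (i), the obstruction is that Garsia's entropy is discontinuous in $\be$, registering only exact coincidences among the sums $\sum a_k\be^{-k}$; one should instead use the entropy of $\mu_\be$ at scale $r$,
\[
H(\mu_\be;r)=-\sum_{I}\mu_\be(I)\log\mu_\be(I),
\]
the sum over the intervals $I=[jr,(j+1)r)$, $j\in\BZ$. As $\mu_\be$ is compactly supported, non-atomic, and depends weakly continuously on $\be$, the map $\be\mapsto H(\mu_\be;r)$ is continuous for each fixed $r$; since also $r\mapsto H(\mu;r)$ is continuous for each such measure $\mu$, the map $\be\mapsto H(\mu_\be;\be^{-n})$ is continuous. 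Iterating the self-similarity $\mu_\be=\tfrac12 S_0\mu_\be+\tfrac12 S_1\mu_\be$ with $S_ix=(x+i)/\be$ gives
\[
H(\mu_\be;\be^{-(n+m)})\ge H(\mu_\be;\be^{-n})+H(\mu_\be;\be^{-m})-C,
\]
with $C$ independent of $n,m$ and locally uniform in $\be$ --- the almost-superadditivity at the core of Hochman's \cite{Hochman} and Breuillard--Varj\'u's \cite{BV} analyses. Hence $\lim_n\tfrac1n H(\mu_\be;\be^{-n})=\sup_n\tfrac1n\bigl(H(\mu_\be;\be^{-n})-C\bigr)$, which by the exact-dimensionality of $\mu_\be$ equals $\dim(\mu_\be)\cdot\log\be$. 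Thus, near any fixed parameter, $\dim(\mu_\be)$ is a supremum of continuous functions of $\be$, hence lower semicontinuous.

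For (ii): the argument above applies at $\be=2$ too, where $\mu_2$ is Lebesgue measure on $[0,1]$ and $\mu_\be\to\mu_2$ weakly, so $\liminf_{\be\to2^-}\dim(\mu_\be)\ge1$; similarly $\dim(\mu_\be)\to1$ as $\be\to1^+$ (the measures, suitably rescaled, converge to a Gaussian by the central limit theorem). Since $\dim(\mu_\tau)=H_\tau<1$ for the golden ratio $\tau$, the infimum of $\dim(\mu_\be)$ over $(1,2)$ lies below $1$ and is not approached at either endpoint, so lower semicontinuity on a compact subinterval gives a minimiser $\be_*$; set $d:=\dim(\mu_{\be_*})<1$. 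For (iii): if $\be_*$ is algebraic, nothing is needed. If $\be_*$ is transcendental, then because $d<1$ the measure $\mu_{\be_*}$ is over-concentrated at all small scales, and one invokes the Hochman--Breuillard--Varj\'u structure theory \cite{Hochman,BV,BV2}: a transcendental $\be_*$ with $\dim(\mu_{\be_*})<1$ is well approximated by algebraic $\be$ whose Bernoulli convolutions are over-concentrated to the same quantitative degree, so that for every $\e>0$ there are algebraic $\be\in(1,2)$, arbitrarily close to $\be_*$, with $\dim(\mu_\be)<d+\e$. In either case $\inf_{\be\in(1,2)\cap\ov\BQ}\dim(\mu_\be)\le d=\min_{\be\in(1,2)}\dim(\mu_\be)$, and combined with ``$\ge$'' this is the assertion.

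The main obstacle is the transcendental case of (iii): that $\dim(\mu_{\be_*})$ can fall below $1$ only when $\be_*$ is well approximated by algebraic numbers sharing the same defect is precisely where the deep input of Hochman and of Breuillard--Varj\'u is needed. (It is conjectured that $\dim(\mu_\be)=1$ for every transcendental $\be\in(1,2)$, which would render this case vacuous, but that remains open.) The lower semicontinuity in (i), though it also rests on Hochman's machinery, is comparatively soft once one replaces Garsia's combinatorial entropy by the scale-$r$ entropy above.
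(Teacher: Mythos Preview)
The paper does not prove this theorem; it is quoted verbatim from Breuillard--Varj\'u \cite[Corollary~6]{BV2} and used as a black box to pass from Theorem~\ref{thm:082} to Theorem~\ref{thm:main}. There is therefore no proof in the paper to compare your proposal against.

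As for your sketch itself: parts (i) and (ii) are essentially sound (lower semicontinuity via the scale-$r$ entropy is indeed how one argues, and the endpoint behaviour is routine, though the $\be\to1^+$ case deserves a bit more care than an appeal to the CLT). But part (iii) is not a proof; it is a restatement of the theorem. The assertion that a transcendental $\be_*$ with $\dim(\mu_{\be_*})<1$ must be approximable by algebraic $\be$ with $\dim(\mu_\be)<d+\e$ \emph{is} the content of \cite{BV2}, proved there via quantitative Diophantine control on the separation of points in $D_n(\be)$ combined with Hochman's inverse theorem. Invoking ``the Hochman--Breuillard--Varj\'u structure theory'' at this step is circular: you are citing precisely the result you set out to establish. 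So your proposal correctly identifies where the difficulty lies and then defers it to the literature --- which, to be fair, is exactly what the paper itself does.
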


Combined with Theorem~\ref{thm:082}, these yield the main theorem of the present paper.

\begin{thm}\label{thm:main}
For all $\be\in(1,2)$ we have $\dim(\mu_\be)\ge 0.82$.
\end{thm}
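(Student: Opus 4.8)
The plan is to obtain Theorem~\ref{thm:main} as a short corollary of the three results already assembled in this section: Theorem~\ref{thm:082} (the uniform entropy bound $H_\be>0.82$), Theorem~\ref{thm:hochman} (Hochman's identity $\dim(\mu_\be)=\min\{H_\be,1\}$ for algebraic $\be$), and the Breuillard--Varj\'u transfer principle, which reduces the global infimum of $\dim(\mu_\be)$ over $(1,2)$ to the infimum over algebraic parameters. No new analytic input is needed at this stage; the argument is purely a matter of chaining these statements in the right order.

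First I would fix an algebraic $\be\in(1,2)$ and apply Theorem~\ref{thm:hochman} to write $\dim(\mu_\be)=\min\{H_\be,1\}$. Feeding in Theorem~\ref{thm:082}, which gives $H_\be>0.82$, together with the trivial inequality $1>0.82$, yields $\dim(\mu_\be)>0.82$ for every algebraic $\be\in(1,2)$, and hence
\[
\inf_{\be\in(1,2)\cap\overline{\BQ}}\dim(\mu_\be)\ge 0.82 .
\]
Then I would invoke the Breuillard--Varj\'u theorem, which asserts
\[
\min_{\be\in(1,2)}\dim(\mu_\be)=\inf_{\be\in(1,2)\cap\overline{\BQ}}\dim(\mu_\be),
\]
the left-hand side being a minimum genuinely attained over the whole interval. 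Combining the two displays gives $\min_{\be\in(1,2)}\dim(\mu_\be)\ge 0.82$, and since this minimum is a lower bound for $\dim(\mu_\be)$ at every single $\be\in(1,2)$, the claim follows.

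Because this deduction is immediate, the entire difficulty is concentrated in Theorem~\ref{thm:082}, i.e.\ in proving $H_\be>0.82$ uniformly over the continuum $(1,2)$, and that is where I expect the real obstacle to be. The point is that $H_\be$ can fall below $1$ only when the partial sums $\sum_{k\le n}a_k\be^{-k}$ collide substantially, which happens exclusively for height-one algebraic $\be$; yet such $\be$ are dense in $(1,2)$, so they cannot be handled one at a time. A workable route --- extending and sharpening \cite{HS2010} --- would be to bound $H_n(\be)/(n\log\be)$ from below by an explicit combinatorial quantity depending only on finitely much information about $\be$, to use this to eliminate all but a controlled family of candidate parameters, and then to partition $(1,2)$ into finitely many subranges on each of which either a uniform estimate or a finite computation closes the gap. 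The delicate aspects are keeping every estimate uniform in $\be$ and ruling out that the infimum of $H_\be$ is approached along a sequence of algebraic numbers of unbounded degree.
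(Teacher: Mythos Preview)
Your deduction is correct and matches the paper's approach exactly: the theorem is stated immediately after Theorems~\ref{thm:082}, \ref{thm:hochman}, and the Breuillard--Varj\'u result, with the sentence ``Combined with Theorem~\ref{thm:082}, these yield the main theorem of the present paper'' serving as the entire argument. Your final paragraph of commentary on how Theorem~\ref{thm:082} might be proved is extraneous to the proof of \emph{this} statement, but it does not affect correctness.
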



\section{Theoretical results}

\begin{prop}
\label{prop:Hbetak}
For all natural $k\ge2$, we have $H_\beta \ge H_{\beta^k}$.
\end{prop}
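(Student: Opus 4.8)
The plan is to relate the two entropies by showing that, as far as collisions among $\be$-expansions are concerned, $\be^k$ can only create \emph{more} identifications than $\be$, and this can only decrease entropy. First I would rewrite $H_{\be^k}$ in terms of the quantity $p_n(\cdot)$ for $\be$. Observe that an element of $D_n(\be^k)$ has the form $x=\sum_{j=1}^n b_j\be^{-kj}$ with $b_j\in\{0,1\}$, and any such $x$ lies in $D_{kn}(\be)$, namely with digit string $(a_1,\dots,a_{kn})$ given by $a_{kj}=b_j$ and all other digits zero. Thus there is a natural injection of the support $D_n(\be^k)$ into $D_{kn}(\be)$. The key observation is that the number of $\be^k$-representations of $x$ of length $n$ equals the number of $\be$-representations of $x$ of length $kn$ \emph{among those supported on the indices divisible by $k$}, which is at most $p_{kn}(x)$, the total number of $\be$-representations of length $kn$.

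The main step is then the comparison of entropies. Write $q_n^{(k)}(x)/2^n$ for the weights defining $\mu_{\be^k}^{(n)}$ on $D_n(\be^k)\subseteq D_{kn}(\be)$, and recall the weights $p_{kn}(x)/2^{kn}$ defining $\mu_\be^{(kn)}$. I would compare $H_n(\be^k)$ with $H_{kn}(\be)$ via a grouping / data-processing argument: the measure $\mu_\be^{(kn)}$ is obtained from a refinement in which one first chooses the "divisible by $k$" digits $(a_k,a_{2k},\dots,a_{kn})$ and then the remaining $k n - n$ digits. Since entropy of a product-type distribution splits, and since further identification of atoms (two distinct digit strings mapping to the same real $x$) only decreases entropy by concavity of $t\mapsto -t\log t$, one obtains
\[
H_{kn}(\be)\le H_n(\be^k)+(kn-n)\log 2 .
\]
Actually the cleaner route is the reverse estimate together with subadditivity: the map sending a length-$kn$ $\be$-expansion to the real number it represents factors through first recording the block sums, so $H_{kn}(\be)$ is bounded using $H_n(\be^k)$ plus the entropy $(k-1)n\log 2$ contributed by the freely chosen non-divisible digits, \emph{minus} nothing — any extra coincidences only help the inequality we want. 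Dividing by $kn\log\be=n\log\be^k$ and letting $n\to\infty$ gives
\[
H_\be=\lim_{n\to\infty}\frac{H_{kn}(\be)}{kn\log\be}\ge\lim_{n\to\infty}\frac{H_n(\be^k)}{n\log\be^k}=H_{\be^k},
\]
where on the left I used that the subsequence $\{H_{kn}(\be)/(kn\log\be)\}$ has the same limit as the full sequence (the limit defining $H_\be$ exists).

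The step I expect to be the main obstacle is making the entropy comparison between $H_{kn}(\be)$ and $H_n(\be^k)$ fully rigorous: one must correctly account for the fact that two different choices of the "divisible" block $(a_k,\dots,a_{kn})$ may already collide as reals (so the $\be^k$-side weights $q_n^{(k)}$ are genuinely $p_{kn}$-like, not uniform $2^{-n}$), and then argue that appending the $(k-1)n$ free digits and passing to the image distribution on $D_{kn}(\be)$ can only lose entropy relative to the naive bound $H_n(\be^k)+(k-1)n\log2$. This is precisely a data-processing inequality for the map "digit string $\mapsto$ represented real", combined with the additive behaviour of entropy under independent choices; I would state it as a short lemma about entropy of pushforwards under a map that is a coordinate projection composed with an arbitrary (many-to-one) evaluation, and verify the bookkeeping of which digit positions are free. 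Everything else — the injection $D_n(\be^k)\hookrightarrow D_{kn}(\be)$, the subsequence argument, dividing through by $n\log\be^k$ — is routine.
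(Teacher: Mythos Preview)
Your argument has the inequality backwards at the crucial step. The data-processing bound you invoke, together with the independence of the ``divisible-by-$k$'' digits and the remaining ones, yields
\[
H_{kn}(\be)\ \le\ H_n(\be^k)+(k-1)n\log 2,
\]
which is correct but useless here: dividing by $kn\log\be$ and letting $n\to\infty$ gives $H_\be\le H_{\be^k}+\frac{(k-1)\log 2}{k\log\be}$, not $H_\be\ge H_{\be^k}$. Your sentence ``any extra coincidences only help the inequality we want'' is exactly wrong---coincidences among length-$kn$ $\be$-strings \emph{decrease} $H_{kn}(\be)$, which hurts a lower bound. The displayed conclusion $H_\be\ge H_{\be^k}$ does not follow from anything you have written.

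What is missing is the opposite entropy inequality. Writing $X=\sum_{j=1}^n a_{kj}\be^{-kj}$ and $Y=\sum_{j\not\equiv 0\ (\mathrm{mod}\ k)}a_j\be^{-j}$, the random variables $X$ and $Y$ are independent and $\mu_\be^{(kn)}$ is the law of $X+Y$; equivalently $\mu_\be^{(kn)}=\mu_{\be^k}^{(n)}*\nu$ for some probability measure $\nu$. For independent discrete real-valued $X,Y$ one has $H(X+Y)\ge H(X+Y\mid Y)=H(X)$, i.e.\ convolution never decreases entropy. This gives directly
\[
H_{kn}(\be)\ \ge\ H_n(\be^k),
\]
with no extra $(k-1)n\log 2$ term to worry about, and then your final division and limit are fine. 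The paper's proof is precisely this convolution argument.
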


\begin{proof}
We have $D_n(\be^k)\subset D_{nk}(\be)$. Furthermore,
\[
\sum_{j=1}^{kn} a_j\be^{-j}=\sum_{j=1}^n a_{kj}\be^{-kj} + \text{another sum},
\]
the sums being independent. Hence
\[
\mu_\be^{(kn)}=\mu_{\be^k}^{(n)}*\nu
\]
for some probability measure $\nu$. Since the entropy of a convolution is always greater
than or equal to the entropy of either measure involved, we have
\[
H_{kn}(\be)\ge H_n(\be^k).
\]
Hence
\[
\frac{H_{kn}(\be)}{kn\log\be} \ge \frac{H_n(\be^k)}{n\log\be^k}.
\]
We now conclude the proof by passing to the limit as $n\to\infty$.
\end{proof}

The following claim will be used in Section~\ref{sec:alg} which deals with the numerical
results.

\begin{cor}\label{cor:1526}
If $\be\in(\sqrt2,1.526]$, then $H_\be>\log2/\log(1.526^2)>0.82$.
\end{cor}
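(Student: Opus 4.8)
The plan is to combine Proposition~\ref{prop:Hbetak} (with $k=2$) with a known lower bound for $H_\alpha$ when $\alpha$ lies in a suitable range above $\sqrt2$. If $\be\in(\sqrt2,1.526]$, then $\be^2\in(2,1.526^2]=(2,2.328\ldots]$. Since $\be^2>2$, every length-$n$ word $(a_1,\dots,a_n)\in\{0,1\}^n$ produces a \emph{distinct} value $\sum_{k=1}^n a_k\be^{-2k}$ (the $\be^2$-expansions with digits in $\{0,1\}$ are unique because the base exceeds the digit alphabet size), so $p_n(\be^2)\equiv 1$ on $D_n(\be^2)$ and hence $H_n(\be^2)=n\log2$ for every $n$. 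Therefore
\[
H_{\be^2}=\lim_{n\to\infty}\frac{H_n(\be^2)}{n\log\be^2}=\frac{\log2}{\log\be^2}\ge\frac{\log2}{\log(1.526^2)}.
\]

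Now apply Proposition~\ref{prop:Hbetak} with $k=2$: $H_\be\ge H_{\be^2}\ge \log2/\log(1.526^2)$. It remains to check the purely numerical inequality $\log2/\log(1.526^2)>0.82$, i.e.\ $1.526^2<2^{1/0.82}$. A short computation gives $1.526^2=2.328676$ and $2^{1/0.82}=2^{1.2195\ldots}\approx 2.329$; one should pin this down carefully since the margin is thin. (Alternatively, one can state it as $\log 2 > 0.82\log(1.526^2)$ and verify $0.6931\ldots > 0.82\cdot 0.8453\ldots = 0.6931\ldots$ — again the closeness means the verification must be done with enough precision to be rigorous, perhaps by using a slightly smaller upper endpoint than $1.526$ in the intermediate estimate or by exhibiting explicit rational bounds for the logarithms.)

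The only real subtlety is this last step: because $1.526$ is essentially chosen so that $\log 2/\log(1.526^2)$ is just barely above $0.82$, the inequality is genuinely tight and the numerical verification carries the whole weight of the statement. Everything else is immediate from the two preceding results, together with the elementary observation that uniqueness of digit representations in base $\be^2>2$ forces the Garsia entropy of $\be^2$ to equal $\log2/\log\be^2$ exactly. I would therefore present the argument in three lines and then devote a footnote or a displayed inequality with explicit decimal (or rational) bounds to confirming $1.526^2<2^{1/0.82}$.
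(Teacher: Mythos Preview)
Your argument is correct and is exactly the paper's approach: the paper's entire proof is the single sentence ``Apply Proposition~\ref{prop:Hbetak} with $k=2$,'' and you have simply unpacked this by observing that $\be^2>2$ forces $H_{\be^2}=\log 2/\log\be^2$ and then bounding by the right endpoint $\be=1.526$. Your caution about the tightness of the numerical inequality $\log 2/\log(1.526^2)>0.82$ is well placed (the value is approximately $0.8200$), but the paper treats it as self-evident; one minor slip is your notation ``$p_n(\be^2)\equiv1$'' where you mean $p_n(x)\equiv1$ for $x\in D_n(\be^2)$.
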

\begin{proof}Apply Proposition~\ref{prop:Hbetak} with $k=2$.
\end{proof}

The next corollary is much weaker than Theorem~\ref{thm:main} proved in Section~\ref{sec:alg}
but is still worth mentioning because it is straightforward and does not require any
computations while still being applicable.

\begin{cor}\label{cor:greater-than-a-half}
We have $H_\be>\frac12$ for all $\be\in(1,2)$.
\end{cor}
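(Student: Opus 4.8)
The plan is to combine Proposition~\ref{prop:Hbetak} with a trivial lower bound for $H_\be$ valid when $\be$ is not too close to~$2$, and then iterate. First I would observe that for any $\be\in(1,2)$ we have the completely elementary estimate $H_\be\ge\log2/\log\be - o(1)$ is \emph{too optimistic} in general; instead, the only bound we can write down for free is $H_n(\be)\le n\log2$, giving $H_\be\le\log2/\log\be$, which is the wrong direction. So the real starting point must be a cheap \emph{lower} bound. The cheapest one available with no computation: since $D_n(\be)\subseteq\mathbb R$ and the points of $D_n(\be)$ that actually coincide must lie within distance $\be^{-n}\cdot\frac{1}{\be-1}$ of each other, one shows $\#D_n(\be)\ge$ (something like) $(2\be^{-1})^{n}$ up to constants when $\be<2$, hence $H_n(\be)\ge\log\#D_n(\be)\gtrsim n\log(2/\be)\cdot(1-o(1))$ is again not obviously enough near $\be=2$.

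The clean route is instead: for $\be\in(1,\sqrt2\,]$ we already have $\be^2\le2$, and then $D_n(\be^2)$ consists of $2^n$ distinct points (since $\be^2\le2<$ the relevant threshold forces no collisions among sums $\sum a_k\be^{-2k}$ when $\be^2\le 2$... more carefully, when $\be\le 2^{1/m}$ the first $m$-fold collision is pushed out), so $H_{\be^2}=\log2/\log\be^2\ge\log2/\log2=1>\tfrac12$, and Proposition~\ref{prop:Hbetak} with $k=2$ gives $H_\be\ge H_{\be^2}\ge1>\tfrac12$. This disposes of $\be\in(1,\sqrt2\,]$. For the remaining range $\be\in(\sqrt2,2)$ I would apply Proposition~\ref{prop:Hbetak} once more in the form $H_\be\ge H_{\be^k}$ with $k$ chosen so that $\be^k$ lands in an interval where a direct bound is known --- but the self-contained statement we want only claims $\tfrac12$, so it suffices to pick, for each $\be\in(\sqrt2,2)$, the integer $k\ge1$ with $2^{1/2}<\be^{k}\le 2$ replaced by the observation that \emph{some} power $\be^k$ satisfies $H_{\be^k}>\tfrac12$ as long as $H_\gamma>\tfrac12$ holds for all $\gamma$ in a right-neighbourhood of $\sqrt2$.

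Concretely, the steps, in order: (1) recall $D_n(\be^k)\subseteq D_{nk}(\be)$ and the convolution identity already established, so $H_\be\ge H_{\be^k}$ for all $k\ge2$; (2) for $\gamma\in(\sqrt2,2)$, pick $k$ with $\gamma^{k}\in(\sqrt2,2)$ impossible for all $k$ simultaneously, so instead use the following: the function $\be\mapsto H_\be$ is bounded below by the \emph{Garsia entropy of the nearest $m$th root}, and one checks by the no-collision criterion that if $\be< 2^{1/m}$ then $H_\be=\log2/\log\be$; (3) hence for any $\be\in(1,2)$ choose $m$ with $2^{1/(m+1)}\le\be<2^{1/m}$... wait, that forces $\be\ge2^{1/m}$; rather, given $\be<2$ choose $m\ge1$ with $\be^{1/m}<2^{1/m}$ trivially, which is unhelpful --- so the correct move is: choose $k$ with $\be^{k}\ge\sqrt2$ and $\be^{k}<2$, i.e. $k=\lceil \log\sqrt2/\log\be\rceil$, check $\be^k<2$ using $\be<2$... this can fail. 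The genuine fix (and the step I expect to be the main obstacle) is to handle the window $\be^k\in[\sqrt2,2)$ not landing cleanly: one shows that for \emph{every} $\be\in(1,2)$ there is $k\ge1$ with $\be^{k}\in[2^{1/2},2^{3/4})$ or some fixed subinterval bounded away from $2$, by a covering-of-$(1,2)$ argument on the multiplicative grid $\{\be^k\}$; then on that fixed subinterval $[2^{1/2},c]$ with $c<2$ one has the uniform bound $H_\gamma\ge\log2/\log(c^2)>\tfrac12$ via Proposition~\ref{prop:Hbetak} applied to $\gamma$ (exactly as in Corollary~\ref{cor:1526}, but with $1.526$ relaxed to any $c<2^{3/4}$, where $\log2/\log(c^2)>1/2$ iff $c<2^{3/4}$). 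Chaining $H_\be\ge H_{\be^k}\ge\log2/\log(\be^{2k})>\tfrac12$ finishes it.
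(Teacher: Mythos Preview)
Your proposal contains a genuine error at the core step: you have the ``no collisions'' threshold backwards. You write that ``$\be^2\le 2$ forces no collisions among sums $\sum a_k\be^{-2k}$'', and hence $H_{\be^2}=\log 2/\log\be^2$ for $\be\in(1,\sqrt2\,]$. This is false: for $\gamma\in(1,2)$ there can certainly be collisions (any height-one $\gamma$, e.g.\ the golden ratio, gives $H_\gamma<\log 2/\log\gamma$). The correct elementary fact goes the other way: if $\gamma\ge 2$ then $\sum_{j>n}\gamma^{-j}\le\gamma^{-n}$, so all sums $\sum_{k=1}^n a_k\gamma^{-k}$ with $a_k\in\{0,1\}$ are distinct and $H_\gamma=\log 2/\log\gamma$.

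Once this is fixed, the argument is immediate and is exactly what the paper does. For $\be\in(\sqrt2,2)$ one has $\be^2\in(2,4)$, hence $H_{\be^2}=\log2/\log\be^2=\log2/(2\log\be)>\tfrac12$ since $\be<2$; Proposition~\ref{prop:Hbetak} with $k=2$ gives $H_\be\ge H_{\be^2}>\tfrac12$. For $\be=\sqrt2$ one has $H_{\sqrt2}=\log2/\log\sqrt2=2$. For $\be\in(1,\sqrt2)$ choose $k\ge2$ with $\be^k\in[\sqrt2,2)$; such $k$ exists because $\be<\sqrt2$ forces $\be^{k+1}/\be^k<\sqrt2$, the ratio of the endpoints of $[\sqrt2,2)$, so the orbit $\{\be^k\}$ cannot skip this interval. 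Then $H_\be\ge H_{\be^k}>\tfrac12$ by the previous cases. Your final paragraph gropes toward this via an interval $[2^{1/2},2^{3/4})$ together with the claim ``$\log2/\log(c^2)>\tfrac12$ iff $c<2^{3/4}$''; the arithmetic is wrong (the condition is $c<2$), and the covering claim fails for $\be>2^{3/4}$ anyway. The clean interval is $[\sqrt2,2)$, reached by squaring, not by hunting for a power inside a shorter window.
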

\begin{proof}If $\be\in(\sqrt2,2)$, then $H_\be\ge H_{\be^2}=\frac{\log2}{2\log\be}>\frac12$.
Further $H_{\sqrt2}=2$.
Finally for $\be\in(1,\sqrt2)$ we simply take $k$ such that $\sqrt{2} \leq \be^k < 2$ and apply
Proposition~\ref{prop:Hbetak}.
\end{proof}

\begin{rmk}Fix $k\ge2$; then it follows that for any $\e>0$ there exists $\de>0$ such that
$H_\be>1-\e$ whenever $\be\in(2^{1/k}, 2^{1/k}+\de)$. In particular, this allows us to improve
some of our our bounds from \cite{HS2010}. Namely, let $\be_3\approx1.4433$ and $\be_4\approx1.4656$
denote the third and fourth smallest Pisot numbers. Then $H_{\be_3}>0.9445, H_{\be_4}>0.9066$.
\end{rmk}

\begin{prop}
\label{prop:notinfield}
If $[\mathbb{Q}(\beta^{1/k}): \mathbb{Q}(\beta)] = k$ for some $k \geq 2$,
    then $H_{\be^{1/k}}=kH_{\be}$.
\end{prop}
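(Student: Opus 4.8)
The plan is to show the two inequalities $H_{\be^{1/k}} \le k H_\be$ and $H_{\be^{1/k}} \ge k H_\be$ separately, with the condition $[\BQ(\be^{1/k}):\BQ(\be)]=k$ entering only in the second. Write $\ga=\be^{1/k}$, so that $\ga^k=\be$. The first inequality is essentially Proposition~\ref{prop:Hbetak} reread: applying it to $\ga$ with exponent $k$ gives $H_\ga \ge H_{\ga^k}=H_\be$, but this is the \emph{wrong} direction, so instead I would go back to the underlying estimate $H_{kn}(\ga)\ge H_n(\ga^k)=H_n(\be)$ together with the complementary bound. The key point is that in Proposition~\ref{prop:Hbetak} the convolution factor $\nu$ is itself a push-forward of a Bernoulli-type distribution, and its entropy is controlled: $H_{kn}(\ga)\le H_n(\be)+H(\nu)$ where $H(\nu)$ is the entropy of the measure coming from the "other sum" $\sum a_j\ga^{-j}$ over indices $j\not\equiv 0\pmod k$. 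Dividing by $kn\log\ga=n\log\be$ and letting $n\to\infty$ yields $H_\ga \le H_\be + \limsup H(\nu)/(n\log\be)$. Since trivially $H(\nu)\le (k-1)n\log 2$, this alone is too weak; the honest route is to observe that without any field condition one only gets $H_\ga \le k\cdot(\log 2/\log\be^{1/k})$-type trivial bounds, so the real content is the \emph{reverse} inequality and a matching upper bound that together force equality. I therefore expect the proof to run: (i) show $H_\ga \ge kH_\be$ always, using that each representation of $x\in D_{kn}(\ga)$ restricts to a representation in $D_n(\be)$ after grouping, hence $p_{kn}^{(\ga)}$ dominates $p_n^{(\be)}$ on the relevant fibers and entropy can only increase; and (ii) show $H_\ga \le kH_\be$ under the hypothesis $[\BQ(\ga):\BQ(\be)]=k$.

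For step (ii), which I expect to be the main obstacle, the idea is that the field condition prevents any "extra collisions" in $D_{kn}(\ga)$ beyond those already forced by collisions in $D_n(\be)$. Concretely, suppose $\sum_{j=1}^{kn} a_j\ga^{-j}=\sum_{j=1}^{kn} b_j\ga^{-j}$. Group the exponents by residue class mod $k$: writing $j=ki+r$ with $0\le r<k$, the equation becomes $\sum_{r=0}^{k-1}\ga^{-r}\bigl(\sum_i (a_{ki+r}-b_{ki+r})\be^{-i}\bigr)=0$. The coefficients $\sum_i(a_{ki+r}-b_{ki+r})\be^{-i}$ lie in $\BQ(\be)$, and $\{1,\ga^{-1},\dots,\ga^{-(k+1)}\}$—equivalently $\{1,\ga,\dots,\ga^{k-1}\}$—is a basis of $\BQ(\ga)$ over $\BQ(\be)$ precisely because $[\BQ(\ga):\BQ(\be)]=k$. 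Hence each of the $k$ inner sums vanishes \emph{independently}: $\sum_i a_{ki+r}\be^{-i}=\sum_i b_{ki+r}\be^{-i}$ for every $r$. This says the collision structure of $D_{kn}(\ga)$ is exactly the $k$-fold product of the collision structure of $D_n(\be)$, i.e. $p_{kn}^{(\ga)}(x)$ factors as a product $\prod_{r=0}^{k-1} p_n^{(\be)}(x_r)$ over the $k$ "coordinates" $x_r$, and the associated distribution $\mu_\ga^{(kn)}$ is literally the $k$-fold convolution-product decomposing as independent pieces each distributed as $\mu_\be^{(n)}$. Therefore $H_{kn}(\ga)=k\,H_n(\be)$ exactly, and dividing by $kn\log\ga=n\log\be$ and passing to the limit gives $H_\ga=kH_\be$, which simultaneously establishes both inequalities and finishes the proof.

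The delicate part in the argument above is justifying that $\{1,\ga,\dots,\ga^{k-1}\}$ being a $\BQ(\be)$-basis of $\BQ(\ga)$ lets us "split" the vanishing linear relation—one must be careful that $n$ is large enough that all the truncated power sums genuinely live in the respective number fields (they always do, being finite $\BQ$-linear combinations of powers of $\ga$ and $\be$), and that the index $j$ ranges over $1,\dots,kn$ cleanly partition into the $k$ residue classes, each contributing exactly $n$ indices $i=0,\dots,n-1$ (up to a harmless shift in the indexing of $\be^{-i}$ versus $\be^{-i-1}$, absorbed by pulling out a power of $\be$). Once that bookkeeping is in place, the entropy identity $H_{kn}(\ga)=kH_n(\be)$ is immediate from the fact that entropy is additive over independent coordinates, and no limiting or approximation step is needed beyond the definition of $H_\be$.
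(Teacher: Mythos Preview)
Your core argument---grouping the indices $1,\dots,kn$ by residue class mod $k$, using that $\{1,\ga,\dots,\ga^{k-1}\}$ is a $\BQ(\be)$-basis of $\BQ(\ga)$ to force each residue-class sum to collide independently, and concluding $H_{kn}(\ga)=kH_n(\be)$ exactly---is correct and is precisely the paper's argument. The paper phrases it via conditional entropy, writing $H(X+Y)=H(X)+H(Y)-H(Y\mid X+Y)$ and showing $H(Y\mid X+Y)=0$ because $X+Y$ determines $Y$ (and then inducting on $k$); you phrase it as a factorization $p_{kn}^{(\ga)}(x)=\prod_r p_n^{(\be)}(x_r)$ and additivity of entropy over independent coordinates. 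These are the same statement.

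However, your opening ``plan'' is muddled and should be scrapped. You claim step~(i) is to show $H_\ga\ge kH_\be$ \emph{always}, but this is false in general and is in fact the direction that requires the field hypothesis; the inequality that holds unconditionally is $H_{kn}(\ga)\le kH_n(\be)$ (since $\sum_r\ga^{-r}X_r$ is a function of the independent tuple $(X_0,\dots,X_{k-1})$), i.e.\ $H_\ga\le kH_\be$. Proposition~\ref{prop:Hbetak} only gives $H_\ga\ge H_\be$, not $kH_\be$. Fortunately none of this matters, because once you reach the factorization argument you prove the exact equality $H_{kn}(\ga)=kH_n(\be)$ in one stroke, which supersedes both inequalities. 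Delete the first two paragraphs and lead with the collision-splitting argument. (Also, you have a typo: $\ga^{-(k+1)}$ should be $\ga^{-(k-1)}$.)
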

\begin{proof}
For convenience, let $\be_0 = \be^{1/k}$.
Assume first $k=2$. Put $X=\sum_1^n a_{2j}\be_0^{-2j},\ Y=\sum_1^n a_{2j-1}\be_0^{-2j+1}$.
We have
\[
H(X+Y)=H(X)+H(Y)-H(Y|X+Y).
\]
It is well known that $H(Y|X+Y)=0$ if and only if the value of $Y$ is completely determined by
the value of $X+Y$. This is indeed the case here: if $X+Y=X'+Y'$, then $X=X'$ and $Y=Y'$, in view of
$\be_0\notin\mathbb Q(\be_0^2)$. Hence $H(X+Y)=H(X)+H(Y)$. We have, in our usual notation,
$H(X+Y)=H_{2n}(\be_0),\ H(X)=H(Y)=H_n(\be_0^2)$. Therefore, $H_{2n}(\be_0)=2H_n(\be_0^2)$, and the claim
follows from dividing by $2n\log\be_0$ and passing to the limit.

Assume now $k\ge3$ and proceed by induction. We have
\[
H\left(\sum_{j=1}^k X_j\right)=H\left(\sum_{j=1}^{k-1} X_j\right)+H(X_k)-H(X_k | X_1+\dots +X_k).
\]
Since the value of $X_1+\dots +X_j$ determines the value of $X_i,\ 1\le i\le j$, the last summand is zero,
and we can apply the inductive step to $H\left(\sum_{j=1}^{k-1} X_j\right)$. Consequently,
$H(X_1+\dots+X_k)=H(X_1)+\dots H(X_k)=kH(X_1)$, which yields the claim.
\end{proof}

Although it is possible for $[\mathbb{Q}(\beta^{1/k}): \mathbb{Q}(\beta)] \neq
    k$, this does not happen often.
See for example \cite[Theorem~9.1, p.~297]{Lang93}, which we
    rewrite here in our notation.

\begin{thm}[Lang \cite{Lang93}]
\label{thm:Lang}
Let $K$ be a field and $k$ an integer $\geq 2$.
Let $\beta \in K$, $\beta \neq 0$.
Assume that for all prime numbers $p$ such that
    $p | k$ there does not exist an $\alpha \in K$
    such that $\beta = \alpha^p$.
Further if $4 | k$ assume that there does not exist an $\alpha \in K$
    such that $\beta = -4 \alpha^4$.
Then $x^k - \beta$ is irreducible in $K[x]$,
    that is $[K(\beta^{1/k}): K] = k$.
\end{thm}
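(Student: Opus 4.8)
The plan is to establish the stronger statement that $x^k-\be$ is irreducible in $K[x]$; since this polynomial is monic, irreducibility immediately yields that any root generates an extension of degree $k$, i.e.\ $[K(\be^{1/k}):K]=k$. I would run the classical three-stage argument: prime exponent, then prime-power exponent, then arbitrary $k$. Throughout, $K^p$ denotes the set of $p$-th powers in $K$.

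\emph{Step 1 (prime exponent).} The heart is the lemma that for a prime $p$ and $a\in K$ with $a\neq0$, the polynomial $x^p-a$ is irreducible over $K$ unless $a\in K^p$. One direction is trivial. For the other, suppose $x^p-a=f(x)g(x)$ with $f$ monic of degree $d$, $0<d<p$. Fix a root $\al$ of $x^p-a$ in an algebraic closure; every root is $\zeta\al$ with $\zeta^p=1$, so the constant term of $f$ equals $\pm\zeta_0\al^d$ for some $p$-th root of unity $\zeta_0$ and lies in $K$; call it $c$. Then $c^p=\pm a^d$, so in any case $a^d=(c')^p$ for some $c'\in K$. Picking $r,s\in\BZ$ with $dr+ps=1$ (possible since $\gcd(d,p)=1$) gives $a=(a^d)^r(a^p)^s=\bigl((c')^r a^s\bigr)^p\in K^p$, contradicting the hypothesis.

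\emph{Step 2 (prime-power exponent).} Next I would prove by induction on $e$ that $x^{p^e}-\be$ is irreducible over $K$ whenever $\be\notin K^p$ and, when $p=2$, also $\be\notin-4K^4$. The case $e=1$ is Step~1. For the inductive step put $\be_1=\be^{1/p}$, a root of $x^p-\be$; by Step~1, $[K(\be_1):K]=p$ with minimal polynomial $x^p-\be$. To apply the inductive hypothesis to $x^{p^{e-1}}-\be_1$ over $K(\be_1)$, I must verify $\be_1\notin K(\be_1)^p$ and, for $p=2$, $\be_1\notin-4\,K(\be_1)^4$. Checking that $\be_1$ is not a $p$-th power in $K(\be_1)$ is where the hypotheses on $\be$ get consumed, and this is the step I expect to be the main obstacle. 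For odd $p$ it is a short norm descent along $K(\be_1)/K$; for $p=2$ it is precisely where the exceptional condition enters, the relevant identity being $x^4+4c^4=(x^2-2cx+2c^2)(x^2+2cx+2c^2)$, which is the source of that condition: when $\be=-4c^4$ it shows $x^4-\be$ factors even though $\be$ need not be a square in $K$.

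\emph{Step 3 (general exponent).} Finally, factor $k=p_1^{e_1}\cdots p_r^{e_r}$. Fix a root $\gamma=\be^{1/k}$ in an algebraic closure, so that $\gamma^{k/p_i^{e_i}}$ is a root of $x^{p_i^{e_i}}-\be$. By Step~2 each $x^{p_i^{e_i}}-\be$ is irreducible, hence $[K(\gamma^{k/p_i^{e_i}}):K]=p_i^{e_i}$, and these degrees are pairwise coprime; therefore the compositum $L$ of the fields $K(\gamma^{k/p_i^{e_i}})\subseteq K(\gamma)$ has $[L:K]=\prod_i p_i^{e_i}=k$. Since $\gcd(k/p_1^{e_1},\dots,k/p_r^{e_r})=1$, a Bézout combination of those exponents writes $\gamma$ as a product of powers of the $\gamma^{k/p_i^{e_i}}$, so $\gamma\in L$ and thus $L=K(\gamma)$. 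Hence $[K(\be^{1/k}):K]=k$, which is equivalent to the irreducibility of $x^k-\be$ in $K[x]$.
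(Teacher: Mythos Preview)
The paper does not prove this theorem; it is quoted verbatim from Lang's \emph{Algebra} as a black box and then applied in Theorem~\ref{thm:pisot-entropy}. There is therefore no proof in the paper to compare your attempt against.

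Your outline is the classical three-stage argument (and is essentially Lang's own proof). Steps~1 and~3 are correct as written. The genuine content, as you yourself flag, sits in Step~2: propagating the hypotheses from $K$ up to $K(\be_1)$. For odd $p$ the norm descent you allude to does work cleanly: if $\be_1=\ga^p$ in $K(\be_1)$ then $N_{K(\be_1)/K}(\be_1)=\be=N(\ga)^p\in K^p$, a contradiction. For $p=2$ you correctly identify the identity $x^4+4c^4=(x^2-2cx+2c^2)(x^2+2cx+2c^2)$ as the reason the extra hypothesis is needed, but you stop short of actually verifying that $\be_1\notin K(\be_1)^2$ and, for $e\ge3$, that $\be_1\notin-4K(\be_1)^4$. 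These checks require writing $\ga=a+b\be_1$ explicitly and solving; the second one in particular does not follow from a naive norm argument and is exactly where the assumption $\be\notin-4K^4$ gets consumed. So your proposal is a correct plan with the hard lemma honestly left open --- reasonable as a roadmap, but not yet a proof.
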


\begin{thm}\label{thm:pisot-entropy}
Let $k\ge2$ and $\beta \in (1,2)$.
Assume for all $p | k$ that
    there does not exist an $\alpha \in \mathbb{Q}(\beta)$ such
    that $\beta = \alpha^p$.
Then $H_{\be^{1/k}}\geq 1$ and consequently, $\dim(\mu_{\be^{1/k}})=1$.
\end{thm}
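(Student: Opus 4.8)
The plan is to combine Proposition~\ref{prop:notinfield} with Lang's irreducibility criterion (Theorem~\ref{thm:Lang}) to reduce everything to a statement about $H_\be$ for the given $\be$, and then to invoke the trivial lower bound $H_\be\ge0.82>1/k\cdot\text{(something)}$ — more precisely, to get the factor of $k$ we need. First I would check the hypotheses of Theorem~\ref{thm:Lang} with $K=\mathbb{Q}(\be)$: the assumption that for every prime $p\mid k$ there is no $\al\in\mathbb{Q}(\be)$ with $\be=\al^p$ is exactly what Lang requires, \emph{except} for the extra condition in the case $4\mid k$, namely that there is no $\al\in\mathbb{Q}(\be)$ with $\be=-4\al^4$. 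So the first real step is to dispose of that degenerate case: since $\be\in(1,2)$ is a positive real, $\be=-4\al^4$ would force $\al^4=-\be/4<0$, which is impossible for $\al$ in the real field $\mathbb{Q}(\be)\subset\BR$. Hence that clause is vacuous and Lang's theorem applies, giving $[\mathbb{Q}(\be^{1/k}):\mathbb{Q}(\be)]=k$.

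With $[\mathbb{Q}(\be^{1/k}):\mathbb{Q}(\be)]=k$ in hand, Proposition~\ref{prop:notinfield} gives $H_{\be^{1/k}}=k\,H_\be$ directly. Now I would apply Theorem~\ref{thm:082}, which says $H_\be>0.82$ for every $\be\in(1,2)$; since $k\ge2$ we get $H_{\be^{1/k}}=k\,H_\be>2\cdot0.82=1.64>1$, so in particular $H_{\be^{1/k}}\ge1$. (Even the weaker Corollary~\ref{cor:greater-than-a-half} would suffice here, since $k\cdot\tfrac12=k/2\ge1$.)

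Finally, to pass from the entropy statement to the dimension statement, note that $\be^{1/k}\in(1,2)$ as well (it lies strictly between $1$ and $2^{1/k}<2$), and it is algebraic because $\be$ is: $\be$ satisfies the hypotheses only if it is algebraic — actually one should be slightly careful here, since Lang's theorem is stated for an arbitrary field and does not require $\be$ algebraic. If $\be$ is transcendental, then $\be^{1/k}$ is transcendental too, all the sums $\sum a_j(\be^{1/k})^{-j}$ are distinct, and $H_{\be^{1/k}}=\log2/\log(\be^{1/k})>1$ trivially, with $\dim(\mu_{\be^{1/k}})=1$ by Hochman's theorem in the form that transcendental parameters give full dimension (or by absolute continuity considerations). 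In the algebraic case, Theorem~\ref{thm:hochman} gives $\dim(\mu_{\be^{1/k}})=\min\{H_{\be^{1/k}},1\}=1$. Either way $\dim(\mu_{\be^{1/k}})=1$, completing the proof. The only genuinely non-routine point is the bookkeeping around Lang's $4\mid k$ exceptional clause, and I expect that to be the main (small) obstacle; everything else is an immediate chaining of the results already established in this section.
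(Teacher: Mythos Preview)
Your argument is correct and follows the paper's route exactly: dispose of Lang's $-4\alpha^4$ clause via positivity of $\beta$ in the real field $\mathbb{Q}(\beta)\subset\BR$, deduce $[\mathbb{Q}(\beta^{1/k}):\mathbb{Q}(\beta)]=k$, apply Proposition~\ref{prop:notinfield}, and bound $H_\beta$ from below---the paper uses Corollary~\ref{cor:greater-than-a-half} (thus avoiding the forward reference to the computer-assisted Theorem~\ref{thm:082}), precisely as you note in parentheses. Your extra paragraph on transcendental $\beta$ is not in the paper and is not quite justified by the results stated here (Theorem~\ref{thm:hochman} is only asserted for algebraic parameters), but it is tangential to the intended algebraic case.
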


\begin{proof}
By assumption, for all $p | k$
    there does not exists an $\alpha \in \mathbb{Q}(\beta)$ such
    that $\beta = \alpha^p$.
Further as $\beta > 1$ we easily see that
    $\beta > 0 > -4 \alpha^4$ for all $\alpha \in \mathbb{Q}(\beta)$.
Hence $x^k - \beta$ is irreducible in $\mathbb{Q}(\beta)[x]$.
Hence $[\mathbb{Q}(\beta^{1/k}): \mathbb{Q}(\beta)] = k$.
Hence $H_{\beta^{1/k}} \geq k H_\beta$.
By Corollary~\ref{cor:greater-than-a-half},
$H_\beta > \frac12$, whence $H_{\beta^{1/k}} \geq k \cdot \frac12 \geq 1$, as required.
\end{proof}

\begin{rmk}It is easy to see with a polynomial with coefficients in $\{-1,0,1\}$
cannot have a root of modulus greater than~2. Hence $\be>2$ cannot be of height~one.
Consequently, although Theorem~\ref{thm:pisot-entropy} is valid for $\be\in[2,2^k)$,
the claim is trivial.
\end{rmk}

\begin{cor}
Let $k \geq 2$ and $\beta \in (1,2)$ a Pisot number.
Assume further that $\beta^{1/k}$ is not a Pisot number.
Then $H_{\beta^{1/k}} \geq 1$ and $\dim(\mu_{\beta^{1/k}}) = 1$.
\end{cor}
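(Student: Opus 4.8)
The plan is to bootstrap from Proposition~\ref{prop:notinfield} and Corollary~\ref{cor:greater-than-a-half}. If one knew that $[\BQ(\be^{1/k}):\BQ(\be)]=k$, then Proposition~\ref{prop:notinfield} would give $H_{\be^{1/k}}=kH_\be$, Corollary~\ref{cor:greater-than-a-half} would give $H_\be>\tfrac12$, hence $H_{\be^{1/k}}>k/2\ge1$, and Theorem~\ref{thm:hochman} would give $\dim(\mu_{\be^{1/k}})=1$ (note $\be^{1/k}$ is algebraic since $\be$ is). The whole difficulty is that this degree can be strictly smaller than $k$; the point of the proof is that the ``defect'' is always caused by an intermediate Pisot number. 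Concretely, I would show that $d:=[\BQ(\be^{1/k}):\BQ(\be)]$ is still $\ge2$, that $\ga:=\be^{1/(k/d)}$ lies in $\BQ(\be)$ and is again a Pisot number in $(1,2)$, and that $[\BQ(\ga^{1/d}):\BQ(\ga)]=d$; then the argument above applies verbatim to $(\ga,d)$ in place of $(\be,k)$.

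To set this up, write $\theta=\be^{1/k}$ for the positive real $k$th root and $K=\BQ(\be)$, and note $K\subseteq\BR$ since $\be$ is real. Let $d$ be the least positive integer with $\theta^d\in K$; since $\theta^k=\be\in K$ one has $d\mid k$. I claim $[K(\theta):K]=d$. By Theorem~\ref{thm:Lang} it is enough to check that $\theta^d$ is not a $p$th power in $K$ for any prime $p\mid d$, and that $\theta^d\ne-4\al^4$ for every $\al\in K$. The first holds by minimality of $d$: a $p$th root of $\theta^d$ in $K\subseteq\BR$ would differ from the positive real root $\theta^{d/p}$ by a real $p$th root of unity, i.e.\ by $\pm1$, forcing $\theta^{d/p}\in K$, which contradicts the minimality of $d$ as $1\le d/p<d$. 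The second is automatic because $\theta^d>0$ while $-4\al^4\le0$ for $\al\in\BR$. Since $\theta$ is a root of $x^d-\theta^d\in K[x]$, this yields $[K(\theta):K]=d$. Now put $m=k/d$ and $\ga=\theta^d=\be^{1/m}$, so that $\ga\in K$.

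Next I would verify that $\ga$ is a Pisot number in $(1,2)$. It is a real number $>1$ (as $\ga^m=\be\in(1,2)$ and $\ga>0$), it is an algebraic integer (being a root of the monic polynomial $x^m-\be$ whose coefficients are algebraic integers), and $\BQ(\ga)=\BQ(\be)$ because $\ga\in\BQ(\be)$ and $\be=\ga^m$; hence every nontrivial embedding $\si$ of $\BQ(\ga)$ is a nontrivial embedding of $\BQ(\be)$, and $|\si(\ga)|^m=|\si(\be)|<1$ since $\be$ is Pisot, so $|\si(\ga)|<1$. If $d=1$ this would say that $\ga=\be^{1/k}$ is a Pisot number, contrary to hypothesis, so $d\ge2$. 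As $\BQ(\ga)=\BQ(\be)$ and $\theta=\ga^{1/d}$, we have $[\BQ(\ga^{1/d}):\BQ(\ga)]=[\BQ(\theta):\BQ(\be)]=d\ge2$, so Proposition~\ref{prop:notinfield} gives $H_{\be^{1/k}}=H_{\ga^{1/d}}=d\,H_\ga$, and Corollary~\ref{cor:greater-than-a-half} gives $H_\ga>\tfrac12$; therefore $H_{\be^{1/k}}>d/2\ge1$, and Theorem~\ref{thm:hochman} yields $\dim(\mu_{\be^{1/k}})=1$.

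The main obstacle is the identity $[K(\theta):K]=d$ together with $\theta^d\in K$, i.e.\ controlling how $x^k-\be$ factors over $\BQ(\be)$; this is precisely what Theorem~\ref{thm:Lang} is tailored for, and the facts that $K\subseteq\BR$ and $\theta^d>0$ are exactly what make its hypotheses automatic. (One could equivalently finish via Theorem~\ref{thm:pisot-entropy}: once $x^d-\ga$ is known to be irreducible over $\BQ(\ga)$, it follows that $\ga$ is not a $p$th power in $\BQ(\ga)$ for any prime $p\mid d$, and then Theorem~\ref{thm:pisot-entropy} applies directly to $(\ga,d)$.)
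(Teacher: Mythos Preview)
Your proof is correct and follows essentially the same strategy as the paper: pass to an intermediate Pisot number $\ga=\be^{1/m}$ (with $m\mid k$) for which $[\BQ(\be^{1/k}):\BQ(\ga)]\ge 2$, then combine Proposition~\ref{prop:notinfield} (equivalently Theorem~\ref{thm:pisot-entropy}) with Corollary~\ref{cor:greater-than-a-half}. The only cosmetic difference is the selection of $\ga$: the paper picks the minimal divisor $n\mid k$ with $\alpha^n$ Pisot and then checks the non-$p$th-power hypothesis of Theorem~\ref{thm:pisot-entropy} via a Galois argument, whereas you pick the minimal $d$ with $\theta^d\in\BQ(\be)$, verify Lang's criterion directly from that minimality, and show afterward (by the same Galois computation) that $\ga=\theta^d$ is Pisot.
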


\begin{proof}
Let $\alpha = \beta^{1/k}$.
Choose $n | k$ such that $\alpha^n$ is Pisot, and for all $n' | n$, $n' < n$
    we have that $\alpha^{n'}$ is not Pisot.
Let $\beta_0 = \alpha^n$.
We claim that $\beta_0$ satisfies the conditions of Theorem
    \ref{thm:pisot-entropy}, from which the result will follow.

Let $p | n$ and assume that $\beta_0 = \alpha_0^p$ for some
    $\alpha_0 \in \mathbb{Q}(\beta_0)$.
We may assume that $\alpha_0 > 1$ as both $\alpha_0$ and $-\alpha_0$ are
    in $\mathbb{Q}(\beta_0)$ and $\beta_0 > 0$.
Then for all Galois actions $\sigma$ on $\mathbb{Q}(\beta_0)$
    we have $\sigma(\beta_0) = \sigma(\alpha_0^p)$.
This gives that $|\alpha_0| > 1$ and all of $\alpha_0$'s conjugates are
    strictly less than $1$ in absolute value.
That is, $\alpha_0$ is either a Pisot number or the negative of a Pisot number.
Since $\alpha_0 > 1$, this gives that $\alpha_0$ is a Pisot number.
This contradicts $n$ being the minimal such divisor of $k$ with this property
    (as we could have taken $n/p$ instead).
Hence there is no $\alpha_0 \in \mathbb{Q}(\beta_0)$ with
    $\beta_0 = \alpha_0^p$, and the result follows from Theorem~\ref{thm:pisot-entropy}.
\end{proof}

\begin{rmk}
The above result is not restricted to Pisot numbers. An equivalent result could have been done for many other
families of algebraic integers. The main property is that the family behaves relatively well under
    powers. For example, a power of a Pisot number is Pisot. Some examples might include
\begin{itemize}
\item Salem numbers (where all conjugates are less than or equal
    to one in absolute value, with at least
    one conjugates equal);
\item Generalized Pisot numbers (where there are $m$ real
      conjugates greater than~$1$ for a fixed $m$ and all other conjugates strictly less
      than $1$ in absolute value);
\end{itemize}
\end{rmk}

Using a similar technique we also show
\begin{cor}
Let $k \geq 2$, $\beta \in (1,2)$ and $\deg(\beta) = r$.
Assume further that $\deg(\beta^{1/k}) > r$.
Then $H_{\beta^{1/k}} \geq 1$ and $\dim(\mu_{\beta^{1/k}}) = 1$.
\end{cor}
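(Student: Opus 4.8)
The plan is to imitate the proof of the preceding corollary, with the property ``is a Pisot number'' replaced by the property ``has degree $r$ over $\mathbb{Q}$''. First I would fix $\alpha=\beta^{1/k}$, the positive real $k$-th root, and record the following: $\alpha\in(1,2)$; $\mathbb{Q}(\alpha^n)\subseteq\mathbb{R}$ for every $n\ge1$, since $\alpha$ is real; $\deg(\alpha^k)=\deg(\beta)=r$; and $\deg(\alpha)=\deg(\beta^{1/k})>r$ by hypothesis. Then I would let $n$ be the smallest divisor of $k$ with $\deg(\alpha^n)=r$; the set of such divisors is nonempty (it contains $k$) and does not contain $1$, so $n\ge2$. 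Set $\beta_0=\alpha^n$. Since $1<\alpha^n\le\alpha^k=\beta<2$ we have $\beta_0\in(1,2)$, and by construction $\deg(\beta_0)=r$ and $\beta_0^{1/n}=\alpha=\beta^{1/k}$.

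The key step is to verify that the pair $(\beta_0,n)$ satisfies the hypothesis of Theorem~\ref{thm:pisot-entropy}: for every prime $p\mid n$ there is no $\gamma\in\mathbb{Q}(\beta_0)$ with $\beta_0=\gamma^p$. Suppose such a $\gamma$ exists. From $\gamma\in\mathbb{Q}(\beta_0)$ and $\beta_0=\gamma^p\in\mathbb{Q}(\gamma)$ we get $\mathbb{Q}(\gamma)=\mathbb{Q}(\beta_0)$, hence $\deg(\gamma)=r$; moreover $\gamma\in\mathbb{Q}(\beta_0)\subseteq\mathbb{R}$. Since $\gamma^p=\beta_0=\alpha^n=(\alpha^{n/p})^p$, the ratio $\gamma/\alpha^{n/p}$ is a real $p$-th root of unity, hence equals $1$ or $-1$, so $\gamma=\pm\alpha^{n/p}$ and therefore $\deg(\alpha^{n/p})=\deg(\gamma)=r$. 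But $n/p$ is a divisor of $k$ strictly smaller than $n$, contradicting the minimality of $n$. This proves the claim.

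With the hypothesis verified, Theorem~\ref{thm:pisot-entropy} applied to $\beta_0\in(1,2)$ and $n\ge2$ gives $H_{\beta^{1/k}}=H_{\beta_0^{1/n}}\ge1$; then Theorem~\ref{thm:hochman} (note that $\beta^{1/k}$ is algebraic and lies in $(1,2)$) yields $\dim(\mu_{\beta^{1/k}})=\min\{H_{\beta^{1/k}},1\}=1$, as claimed.

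I expect the main obstacle to be the middle paragraph, specifically the point where one concludes $\gamma=\pm\alpha^{n/p}$: this is where reality of $\beta$ is used essentially, since $\mathbb{Q}(\beta_0)$ being a subfield of $\mathbb{R}$ forces the $p$-th root of unity $\gamma/\alpha^{n/p}$ to be $\pm1$ rather than genuinely complex, which is exactly what lets the minimality of $n$ bite. (A more computational alternative would be to show directly that $m:=[\mathbb{Q}(\beta^{1/k}):\mathbb{Q}(\beta)]$ divides $k$ with $\beta^{m/k}\in\mathbb{Q}(\beta)$ --- again via reality, this time applied to the constant term of the minimal polynomial of $\beta^{1/k}$ over $\mathbb{Q}(\beta)$ --- and then combine Proposition~\ref{prop:notinfield} with Corollary~\ref{cor:greater-than-a-half}; but the route above is closer to the proof of the previous corollary.)
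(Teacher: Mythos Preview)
Your proof is correct and follows essentially the same route as the paper's: pick a minimal divisor $n$ of $k$ with $\deg(\alpha^n)=r$, set $\beta_0=\alpha^n$, and verify the hypothesis of Theorem~\ref{thm:pisot-entropy} for $(\beta_0,n)$ by deriving a contradiction from a putative $p$-th root in $\mathbb{Q}(\beta_0)$. In fact you are more explicit than the paper at the one delicate point: the paper simply asserts $\deg(\alpha^{n/p})=\deg(\alpha_0)$, whereas you justify it by observing that $\gamma/\alpha^{n/p}$ is a \emph{real} $p$-th root of unity and hence $\pm1$.
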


\begin{proof}
As before, let $\alpha = \beta^{1/k}$.
Choose $n | k$ such that $\deg(\alpha^n) = r$ and for all $n' | n$, $n' < n$
    we have that $\deg(\alpha^{n'}) > r$.
Let $\beta_0 = \alpha^n$.
We claim that $\beta_0$ satisfies the conditions of Theorem
    \ref{thm:pisot-entropy}, from which the result will follow.

Let $p | n$ and assume that $\beta_0 = \alpha_0^p$ for some
    $\alpha_0 \in \mathbb{Q}(\beta_0)$.
We have that $\deg(\alpha_0) \leq \deg(\mathbb{Q}(\beta_0)) = r$.
As $\beta_0 = \alpha_0^p \in \mathbb{Q}(\alpha_0)$ we get that
    $\deg(\alpha_0) \geq r$.
Hence $\deg(\alpha_0) = r$.
This contradicts $n$ being a minimal such divisor of $k$ with the property that $\deg(\alpha^n) = r$,
    as $\deg(\alpha^{n/p}) = \deg(\alpha_0) = r$.
Hence $n/p$ would also has this property.
Therefore, there is no $\alpha_0 \in \mathbb{Q}(\beta_0)$ with
    $\beta_0 = \alpha_0^p$.
Hence the result follows from Theorem \ref{thm:pisot-entropy}.
\end{proof}

Denote by $\E_n(x;\be)$ the set of all 0-1 words of length~$n$ which may act
as prefixes of $\be$-expansions of $x$.
The following result is an extension of \cite[Lemma~3]{HS2010} to all $\be$.

\begin{lemma}\label{lem:mge}
Suppose there exists $\la\in(1,2)$ and $C>0$ such that $\#\E_n(x;\be)\le C\la^n$ for all $x\in I_\be$.
Then
\[
H_\be\ge \log_\be\frac2{\la}.
\]
\end{lemma}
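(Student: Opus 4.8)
The plan is to unwind the definition of Garsia's entropy so that it is expressed directly in terms of the counting function $p_n$, then bound $p_n(x)$ by $\#\E_n(x;\be)$ and invoke the hypothesis. Writing $q_n(x)=p_n(x)/2^n$ for $x\in D_n(\be)$, one has $-\log q_n(x)=n\log 2-\log p_n(x)$, so that
\[
H_n(\be)=-\sum_{x\in D_n(\be)}q_n(x)\log q_n(x)=n\log 2-\sum_{x\in D_n(\be)}q_n(x)\log p_n(x).
\]

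First I would observe that every word $(a_1,\dots,a_n)$ counted by $p_n(x)$ in \eqref{eq:pnx} is a legitimate length-$n$ prefix of a $\be$-expansion of $x$: simply append the tail $a_{n+1}=a_{n+2}=\dots=0$, which gives $x=\sum_{k\ge1}a_k\be^{-k}$. Hence $p_n(x)\le\#\E_n(x;\be)$, and since $D_n(\be)\subseteq I_\be$ the hypothesis yields $p_n(x)\le C\la^n$, i.e. $\log p_n(x)\le\log C+n\log\la$, for every $x\in D_n(\be)$. Substituting this into the displayed identity and using $\sum_{x\in D_n(\be)}q_n(x)=1$ gives
\[
H_n(\be)\ge n\log 2-\log C-n\log\la .
\]
Dividing by $n\log\be$ and letting $n\to\infty$ then yields
\[
H_\be\ge\frac{\log 2-\log\la}{\log\be}=\log_\be\frac2{\la},
\]
which is the assertion.

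There is no real obstacle in this argument: the only point requiring any care is the inequality $p_n(x)\le\#\E_n(x;\be)$, i.e. the identification of the words counted by $p_n(x)$ with admissible prefixes of $\be$-expansions of $x$, together with the trivial remark that the additive constant $\log C$ is absorbed in the limit. The real content is of course hidden in the hypothesis: the quality of the bound depends entirely on how small a $\la$ one can exhibit with $\#\E_n(x;\be)\le C\la^n$, and it is that estimate which the computational work of Section~\ref{sec:alg} is designed to supply.
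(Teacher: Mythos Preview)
Your argument is correct and in fact more elementary than the paper's. The paper does not compute $H_n(\be)$ directly; instead it quotes a pointwise estimate from Lalley (\cite{Lalley98}, Lemma~4), namely that for any fixed sequence $a_1a_2\ldots$ with $x_m=\sum_{j=1}^m a_j\be^{-j}$ one has
\[
\liminf_{k\to\infty}\Bigl(\tfrac{p_{nk}(x_{nk})}{2^{nk}}\Bigr)^{1/k}\ge \be^{-H_n(\be)},
\]
and then combines this with $p_{nk}(x_{nk})\le\#\E_{nk}(x_{nk};\be)\le C\la^{nk}$ to obtain $\be^{-H_n(\be)}\le(\la/2)^n$, taking $n\to\infty$ at the end. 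Your route bypasses Lalley's lemma entirely: the identity $H_n(\be)=n\log 2-\sum_x q_n(x)\log p_n(x)$ plus the uniform bound $p_n(x)\le C\la^n$ gives the inequality in one line. The advantage of your approach is that it is self-contained; the paper's approach, by contrast, reuses the machinery already set up in \cite{HS2010}, which may explain the authors' choice.
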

\begin{proof}
Let $a_1a_2\ldots\in\{0,1\}^{\BN}$ and denote $x_n=\sum_{j=1}^n a_j\be^{-j}$.
Then by \cite[Lemma~4]{Lalley98},
\begin{equation}\label{eq:lalley}
\liminf_{k\to\infty}\left(\frac{p_{nk}(x_{nk})}{2^{nk}}\right)^{1/k}\ge \be^{-H_n(\be)}.
\end{equation}
By (\ref{eq:pnx}) and the definition of $\E_m(x;\be)$, we have
$p_m(x_m)\le \#\E_m(x_m;\be)$, whence $p_{nk}(x_{nk})\le C\la^{nk}$. Hence
by (\ref{eq:lalley}),
\[
\be^{-H_n(\be)}\le \frac{\la^n}{2^n},
\]
whence
\[
\be^{-\frac1n H_n(\be)}\le \frac{\la}2,
\]
which implies, as $n\to\infty$,
\[
\be^{-H_\be}\le \frac{\la}2.
\]
\end{proof}

\section{The algorithm}
\label{sec:alg}

This section is concerned with a computer-assisted proof of Theorem~\ref{thm:082}.
First, we need to introduce some useful notation.

For a sequence $(a_1, a_2, \dots, a_n) \in \{0,1\}^n$ we define
    \[
    (a_1, a_2, \dots, a_n)_L = \sum_{i=1}^n a_i \beta^{-i}
    \]
    and
    \[
    (a_1, a_2, \dots, a_n)_U = \sum_{i=1}^n a_i \beta^{-i}
                               + \sum_{i=n+1}^\infty \beta^{-i}.
    \]
We then see that if a $\beta$-expansion of $x$ begins with
    $a_1 a_2 \dots a_n$, then we necessarily have
    \[
    x \in [ (a_1, a_2, \dots, a_n)_L, (a_1, a_2, \dots, a_n)_U].
    \]
We define \[ m_{n}(x,\beta) = \# \{(a_1, \dots, a_n) \mid
    x \in [ (a_1, a_2, \dots, a_n)_L, (a_1, a_2, \dots, a_n)_U]\}\] and
     $m_n(\beta) = \sup_{x\in I_\be} m_n(x, \beta)$.
Lemma~\ref{lem:mge} implies the following claim.

\begin{prop}
For all $n$ and all $\beta \in (1,2)$  we have
\[
  H_\beta \geq \log \frac{2}{m_n(\beta)^{1/n}}.
\]
\end{prop}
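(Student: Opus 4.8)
The plan is to deduce this from Lemma~\ref{lem:mge}, for which it suffices to exhibit constants $\la\in(1,2)$ and $C>0$ with $\#\E_m(x;\be)\le C\la^m$ for all $m\ge1$ and all $x\in I_\be$. As already observed just before the definition of $m_n$, any word in $\E_m(x;\be)$ forces $x\in[(a_1,\dots,a_m)_L,(a_1,\dots,a_m)_U]$, so $\#\E_m(x;\be)\le m_m(x,\be)\le m_m(\be)$; hence it is enough to bound $m_m(\be)$ for every $m$ in terms of $m_n(\be)$ for the single given $n$.

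The key step, and the only one that really needs an argument, is that $m_\bullet(\be)$ is submultiplicative: $m_{a+b}(\be)\le m_a(\be)\,m_b(\be)$. I would prove this from the self-similarity of $\be$-expansions. Fix $x\in I_\be$ and an admissible length-$a$ prefix $(a_1,\dots,a_a)$ of $x$, i.e.\ $x\in[(a_1,\dots,a_a)_L,(a_1,\dots,a_a)_U]$, and set $y=\be^{a}\bigl(x-\sum_{i=1}^{a}a_i\be^{-i}\bigr)$. Then $y\in[0,\frac1{\be-1}]=I_\be$, because admissibility of the prefix gives $x-\sum_{i=1}^{a}a_i\be^{-i}\in[0,\frac{\be^{-a}}{\be-1}]$; and, comparing $\be$-expansions term by term, a word $(a_{a+1},\dots,a_{a+b})$ extends $(a_1,\dots,a_a)$ to an admissible length-$(a+b)$ prefix of $x$ if and only if it is an admissible length-$b$ prefix of $y$, so there are at most $m_b(\be)$ such extensions. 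Since each word of $\E_{a+b}(x;\be)$ restricts to one of the at most $m_a(x,\be)$ admissible length-$a$ prefixes of $x$, summing gives $m_{a+b}(x,\be)\le m_a(x,\be)\,m_b(\be)\le m_a(\be)\,m_b(\be)$; taking the supremum over $x$ proves submultiplicativity. Iterating yields $m_{kn}(\be)\le m_n(\be)^{k}$, and writing a general $m$ as $qn+r$ with $0\le r<n$ one gets $m_m(\be)\le m_n(\be)^{q}\,m_r(\be)\le 2^{n}\,m_n(\be)^{m/n}=2^{n}\la^{m}$, where $\la:=m_n(\be)^{1/n}$ and we used $m_r(\be)\le 2^{r}\le 2^{n}$ and $m_n(\be)\ge1$.

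Finally I would check $\la\in(1,2)$ and conclude. From $m_n(\be)\le 2^{n}$ we get $\la\le 2$; if $\la=2$ the asserted bound reads $H_\be\ge0$, which is trivial, so assume $\la<2$. Also $\la>1$: indeed $m_n(\be)\ge2$, because for $\be<2$ the interval $[\be^{-n},\be^{-n}/(\be-1)]$ is nonempty and every point of it has the two distinct admissible length-$n$ prefixes $0^{n}$ and $0^{n-1}1$. Thus $\#\E_m(x;\be)\le C\la^m$ for all $m\ge1$ and all $x\in I_\be$ with $C=2^{n}$ and $\la=m_n(\be)^{1/n}\in(1,2)$, so Lemma~\ref{lem:mge} applies and gives $H_\be\ge\log_\be\frac{2}{m_n(\be)^{1/n}}$, which is the assertion. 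I expect the submultiplicativity of $m_\bullet(\be)$---in particular the verification that the rescaled point $y$ genuinely lands in $I_\be$, so that $m_b(\be)$ really does bound the number of continuations---to be the only substantive point; everything else is routine bookkeeping.
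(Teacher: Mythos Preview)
Your proof is correct and is essentially the paper's approach spelled out in full: the paper simply asserts that Lemma~\ref{lem:mge} implies the Proposition, and your submultiplicativity argument for $m_\bullet(\be)$ is exactly the natural way to bridge that gap. Note that you in fact obtain the stronger bound $H_\be\ge\log_\be\frac{2}{m_n(\be)^{1/n}}$ matching Lemma~\ref{lem:mge}; the Proposition's $\log$ (without subscript $\be$) is either a typo or the strictly weaker consequence obtained from $\log\be<1$.
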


A useful fact is that for fixed $n$, the function $m_n(\beta)$ is piecewise constant.
This leads us to an algorithm for computing a lower bound for $H_\beta$.
Note that by Proposition~\ref{prop:Hbetak} for $k=2$ and subsequent
Corollary~\ref{cor:1526}, it suffices to prove Theorem~\ref{thm:main} for
$\be\in(1.526, 2)$.

Our algorithm is as follows:

\begin{enumerate}
\item
Find intervals $I_i$ such that $(1.526, 2) = \bigcup_i I_i$  and such that
    $m_n(\beta)$ is constant on $\mathrm{int}(I_i)$.
    The endpoints of such intervals are called {\em transition points}.
\label{it:1}
\item For all intervals $I_i$ as above and any $\beta \in \mathrm{int}(I_i)$,
    compute $m_n(\beta)$.
\label{it:2}
\item For all intervals $I_i$ as above and $\beta$ an endpoint of $I_i$,
    compute $m_n(\beta)$.
\label{it:3}
\end{enumerate}
Details of this algorithm, along with examples, can be found in \cite{HS2010}.

We then proceed to increase $n$ until such time we have
    $H_\beta > 0.82$ for all $\beta \in (1.526, 2)$.
We eventually needed to search up to degree~18 to do this.
That is not to say that all ranges in $(1.526, 2)$ needed to be searched
    that far.
For example, after degree~4, we no longer need to search between
    1.86 and 1.88.
More striking, after degree~9, we no longer need to search between
    1.526 and 1.7.
All values in this range are already proven to have $H_\beta > 0.82$.
That is, for fixed $n$ we can subdivide $(1.526, 2) = S \cup T$ where we know
    $H_\beta > 0.82$ on $S$ and do not know this on $T$.
Then when working with $n+1$ we only need to find $I_i$ such that  $T \subset \bigcup_i I_i$,
    not all of $(1.526, 2)$.
This reduces the number of tests that need to be done each step.

At high degrees we can optimize even more.
For example, after degree~13 the only intervals not proven to have
    $H_\beta> 0.82$ are [1.8391, 1.8395] and  [1.9274, 1.9277].
(The intervals are actually tighter than this, but this is good enough
    for the discussion.)
At each step we need to search through the list of all degree~14 strings
    $(a_1, a_2 \dots, a_{14})$ compared to itself $(b_1, b_2 \dots, b_{14})$
    and test if there is a transition point coming from
    $(a_1, a_2 \dots, a_{14})_{U/L} = (b_1, b_2 \dots, b_{14})_{U/L}$.
Because we know we are only looking for transition points in the two intervals
    mentioned above, there are huge numbers of strings that can be eliminated
    from this search based on the first $5$ or $6$ terms.
For example, in the interval
    $[1.8391, 1.8395]$, if $a_1 = \dots = a_5 = 1$ then
    $1.135108333 \leq (a_{1}, \dots, a_{14})_{U/L}$.
Further if $b_1 = b_2 = \dots = b_5 = 0$ then
    $(b_{1}, \dots, b_{14})_{U/L} \leq 0.05655621525$.
Hence we know that for any $(a_1, \dots, a_{14})$ starting with
    $a_1 = \dots = a_5 = 1$ and any
    $(b_1, \dots, b_{14})$ starting with
    $b_1 = \dots = b_5 = 0$ we have no transition points in
    $[1.8391, 1.8395]$.
Hence we can remove a huge set of tests without having to test
    each pair individually.

For a particular $\beta$ we see that
    \[
    I_\be= \bigcup [(a_1, \dots, a_n)_L, (a_1, \dots, a_n)_U],
    \]
    with considerable overlap of these intervals.
The computation of $m_n(\beta)$ is equivalent to finding which of these sections has
    maximal overlap.
This can be done reasonably efficiently by first sorting all of the
    strings based on the lower bounds of their associated intervals,
    $(a_1, \dots, a_n)_L$.
Note that necessarily the strings are also sorted by their upper bounds.

Starting at $x =0$, we increase $x$ to the next section where we either add or
    remove an interval of the form $[(a_1, \dots, a_n)_L, (a_1, \dots, a_n)_U]$.
We then keep track at each new section if a new string starts to overlap
    that section, or a previously overlapping string no longer
    overlaps that that section.
This progresses linearly within the sorted set of strings.
This can be done both for numerical tests, as given in \ref{it:2}
    or symbolic tests given in \ref{it:3}.

\section*{Acknowledgment}
The authors are indebted to P\'eter Varj\'u for suggesting
Proposition~\ref{prop:Hbetak} and for stimulating conversations.
The authors would also like to acknowledge David McKinnon for
    the reference to Theorem~\ref{thm:Lang} given in \cite{Lang93}.

\def\lfhook#1{\setbox0=\hbox{#1}{\ooalign{\hidewidth
  \lower1.5ex\hbox{'}\hidewidth\crcr\unhbox0}}}


\begin{thebibliography}{10}
\newcommand{\enquote}[1]{`#1'}

\bibitem{AlexanderZagier91}
\textsc{J.~C. Alexander} and \textsc{D. Zagier}, \enquote{The entropy of a
  certain infinitely convolved {B}ernoulli measure.}
\newblock \textit{J. London Math. Soc. (2)} 44 (1991) 121--134.





\bibitem{BV}
\textsc{E. Breuillard} and \textsc{P. Varj\'u},
\enquote{Entropy of Bernoulli convolutions and uniform exponential growth for linear groups},
http://arxiv.org/abs/1510.04043.

\bibitem{BV2}
\textsc{E. Breuillard} and \textsc{P. Varj\'u},
\enquote{On the dimension of Bernoulli convolutions},
https://arxiv.org/abs/1610.09154v1.

\bibitem{Erdos39}
\textsc{P. \Erdos}, \enquote{On a family of symmetric {B}ernoulli
  convolutions.}
\newblock \textit{Amer. J. Math.} 61 (1939) 974--976.



 \bibitem{Garsia62}\textsc{A. M. Garsia}, \enquote{Arithmetic properties of Bernoulli convolutions}, \newblock \textit{Trans. Amer. Math. Soc.} 102 (1962), 409--432.

\bibitem{Garsia63}
\textsc{A.~M. Garsia}, \enquote{Entropy and singularity of infinite
  convolutions.}
\newblock \textit{Pacific J. Math.} 13 (1963) 1159--1169.

\bibitem{GrabnerKirschenhoferTichy02}
\textsc{P.~J. Grabner}, \textsc{P. Kirschenhofer} and \textsc{R.~F.
  Tichy}, \enquote{Combinatorial and arithmetical properties of linear
  numeration systems.}
\newblock \textit{Combinatorica} 22 (2002) 245--267.


\bibitem{HS2010}\textsc{K. G. Hare and N. Sidorov}, 
\enquote{A lower bound for Garsia's entropy for certain Bernoulli convolutions},
\textit{LMS. J. Comput. Math.} 13 (2010), 130--143.

\bibitem{Hochman}
\textsc{M. Hochman},
\enquote{On self-similar sets with overlaps and inverse theorems for entropy},
\textit{Ann. of Math.} (2) 180 (2014), 773-–822.



\bibitem{Lalley98}
\textsc{S.~P. Lalley}, \enquote{Random series in powers of algebraic
  integers: {H}ausdorff dimension of the limit distribution.}
\newblock \textit{J. London Math. Soc. (2)} 57 (1998) 629--654.

\bibitem{Lang93}
\textsc{S. Lang}, \enquote{Algebra, third edition.}
\newblock \textit{Addison-Wesley Publishing Company.} (1993)


\bibitem{MS}
\textsc{R. D. Mauldin and K. Simon}, \enquote{The equivalence of some Bernoulli convolutions to
Lebesgue measure}, \textit{Proc. Amer. Math. Soc.} 126 (1998), 2733–-2736.


\bibitem{Pa} \textsc{W.~Parry}, \enquote{On the $\beta$-expansions of real numbers},
\newblock \textit{Acta Math. Acad. Sci. Hungar.} 11 (1960) 401--416.

\bibitem{PU} \textsc{F. Przytycki and M. Urba\'nski}, \enquote{On the Hausdorff dimension of some fractal sets},
\newblock\textit{Studia Math.} 93 (1989), 155–-186.



\bibitem{Re} \textsc{A.~R\'enyi}, \enquote{Representations for real numbers and their ergodic properties},
\textit{Acta Math. Acad. Sci. Hungar.} 8 (1957) 477--493.



\bibitem{Sol} \textsc{B. Solomyak}, \enquote{On the random series $\pm \la^n$ (an Erd\H os problem)}, \textit{Ann. of Math.} (2)
142 (1995), no. 3, 611–-625.



\bibitem{Varju-Bernoulli}
\textsc{P. Varj\'u}, \enquote{Absolute continuity of Bernoulli convolutions for algebraic parameters},
http://arxiv.org/abs/1602.00261.

\bibitem{Varju-survey}
\textsc{P. Varj\'u}, \enquote{Recent progress on Bernoulli convolutions},
http://arxiv.org/abs/1608.04210.

\end{thebibliography}
\end{document}